\newtheorem{thm}{\bf Theorem}
\newtheorem{lem}{\bf Lemma}
\newtheorem{example}{\bf Example}
\newtheorem{remark}{\bf Remark}
\begin{document}
\title{A block lower triangular preconditioner for a class of complex symmetric  system of linear equations}
\author{{ Davod Khojasteh Salkuyeh and Tahereh Salimi Siahkalaei}\\[2mm]
\textit{{\small Faculty of Mathematical Sciences, University of Guilan, Rasht, Iran}} \\
\textit{{\small E-mails: khojasteh@guilan.ac.ir, salimi-tahereh@phd.guilan.ac.ir}\textit{}}}
\date{}
\maketitle
\noindent{\bf Abstract.} We present a block lower triangular (BLT) preconditioner to accelerate the convergence of nthe Krylov subspace iterative methods, such as generalized minimal residual (GMRES), for solving a broad class of complex symmetric system of linear equations. We analyze the eigenvalues distribution of preconditioned coefficient matrix. Numerical experiments are given to demonstrate the effectiveness of the BLT preconditioner.
 \\[-3mm]

\noindent{\bf  AMS subject classifications}: 65F10, 65F50, 65F08, 65F10.\\

\noindent{\bf  Keywords}: {complex linear systems, block lower triangular, symmetric positive definite, preconditioning, GMRES, GSOR, MHSS.}\\
\noindent{\it   \\

\pagestyle{myheadings}\markboth{D.K. Salkuyeh,  and  T. Salimi}{BLT preconditioner for complex symmetric linear systems}

\thispagestyle{empty}

\section{Introduction} \label{Eq1}\rm
Consider the complex system of linear equations of the form
\begin{equation}\label{Eq1}
Au=b, 
\end{equation}
where 
\[
A=W+iT,\quad  W,T \in \mathbb{R }^{n \times n},
\]
in which $u=x+iy$ and $b=p+iq$, such that the vectors $x,y,p$ and $q$ are in $\mathbb{R}^{n}$ and $i=\sqrt{-1}$. We assume that $W$ and $T$ are symmetric positive semidefinite matrices such that at least one of them, e.g., $W$, is positive definite. Complex symmetric linear systems of this kind arise in many important problems in scientific computing and engineering applications. For example, FFT-based solution of certain time-dependent PDEs \cite{Bertaccini}, diffuse optical tomography \cite{Arridge}, algebraic eigenvalue problems \cite{Moro, Schmitt}, molecular scattering \cite{Poirier}, structural dynamics \cite{Feriani} and lattice quantum chromodynamics \cite{Frommer}.

In recent years, there have been many works to solve (\ref{Eq1}), and several iterative methods have been presented in literature. For example, based on the Hermitian and skew-Hermitian splitting (HSS) of the matrix $A$, Bai et al. in \cite{Bai1} introduced the Hermitian/skew-Hermitian splitting (HSS) method to solve positive definite system of linear equations. Next, Bai et al.  presented a modified version of the HSS iterative method say (MHSS) \cite{Bai2} to solve systems of the form (\ref{Eq1}). The matrix $A$ naturally possesses the Hermitian/Skew-Hermitian (HS) splitting
\begin{equation}\label{Eq2}
A=H+S,
\end{equation}
where
\[
H=\frac{1}{2}(A+A^H)=W, \quad S=\frac{1}{2}(A-A^H)=iT,
\]
with $A^H$ being the conjugate transpose of $A$. In this case, the HSS and MHSS methods to solve \eqref{Eq1} can be written as follows.

\medskip

\noindent {\bf The HSS method}: \verb"Let" $u^{(0)} \in  {\mathbb{C }^{n}}$ \verb"be an initial guess". \verb"For" $k=0,1,2,\ldots$, \verb"until" $\{u^{(k)}\}$ \verb"converges", \verb"compute" ${u^{(k+1)}}$  \verb"according to the" \verb"following sequence":
\begin{equation}\label{Eq3}
\begin{cases}
(\alpha I+W){u^{(k+\frac{1}{2})}}=(\alpha I-iT){u^{(k)}}+b, \\
(\alpha I+iT){u^{(k+1)}}=(\alpha I-W){u^{(k+\frac{1}{2})}}+b,
\end{cases}
\end{equation}
\verb"where" $\alpha$ \verb"is a given positive constant and" $I$ \verb"is the identity matrix".

\medskip

The HSS iteration can be reformulated into the standard form
$$
u^{(k+1)}=G_{\alpha}u^{(k)}+C_{\alpha}b, \quad  k=0,1,2,\ldots,
$$
where
$$
G_{\alpha}=(\alpha I+iT)^{-1}(\alpha I-W)(\alpha I+W)^{-1}(\alpha I-iT),
$$
and
$$
C_{\alpha}=2\alpha(\alpha I+iT)^{-1}(\alpha I+W)^{-1}.
$$
\noindent Assumming
$$
M_{\alpha}=\frac{1}{2\alpha}(\alpha I+W)(\alpha I+iT),  \quad   N_{\alpha}=\frac{1}{2\alpha}(\alpha I-W)(\alpha I-iT),
$$
it holds that
$$
A=M_{\alpha}-N_{\alpha}, \quad  G_{\alpha}={M_{\alpha}}^{-1}N_{\alpha}.
$$
Hence, it follows that the matrix $M_{\alpha}$ can be used as a preconditioner for the complex symmetric system \eqref{Eq1}. Note that the multiplicative factor $1/(2\alpha)$ has no effect on the preconditioned system and therefore it can be dropped. Hence, the HSS preconditioner can be considered as $P_{\alpha}=(\alpha I+W)(\alpha I+iT)$.

\medskip

\noindent {\bf The MHSS method}: \verb"Let" $u^{(0)} \in  {\mathbb{C }^{n}}$ \verb"be an initial guess". \verb"For" $k=0,1,2,\ldots$, until $\{u^{(k)}\}$ \verb"converges, compute" ${u^{(k+1)}}$  \verb"according to the following sequence":
\begin{equation}\label{Eq4}
\begin{cases}
(\alpha I+W){u^{(k+\frac{1}{2})}}=(\alpha I-iT){u^{(k)}}+b, \\
(\alpha I+T){u^{(k+1)}}=(\alpha I+iW){u^{(k+\frac{1}{2})}}-ib,
\end{cases}
\end{equation}
\verb"where" $\alpha$ \verb"is a given positive constant and" $I$ \verb"is the identity matrix".

\medskip

In \cite{Bai2}, it has been shown that if $W$ and $T$ are symmetric positive definite and symmetric positive semidefinite, respectively, then the MHSS iterative method is convergent.  Since $\alpha I+T$ and $\alpha I+W$ are symmetric positive definite, the involving sub-system in each step of the MHSS iteration can be solved exactly by using the Cholesky factorization of the coefficient matrices or inexactly by the conjugate gradient (CG) method. Similar to the HSS method, Eq. (\ref{Eq4}) can be written in the stationary form
$$
u^{(k+1)}=L_{\alpha}u^{(k)}+R_{\alpha}b, \quad  k=0,1,2,\ldots,
$$
where
$$
L_{\alpha}=(\alpha I+T)^{-1}(\alpha I+iW)(\alpha I+W)^{-1}(\alpha I-iT),
$$
and
$$
R_{\alpha}=(1-i)\alpha(\alpha I+T)^{-1}(\alpha I+W)^{-1}.
$$
Then, $A=F_{\alpha}-H_{\alpha}$ and $L_{\alpha}={F_{\alpha}}^{-1}H_{\alpha}$, where
\[
F_{\alpha}=\frac{1+i}{2\alpha}(\alpha I+W)(\alpha I+T),\quad H_{\alpha}=\frac{1+i}{2\alpha}(\alpha I+iW)(\alpha I-iT).
\]
Therefore matrix $Q_{\alpha}=(\alpha I+W)(\alpha I+T)$ can be used as the MHSS preconditioner.

It is possible to convert the complex system (\ref{Eq1}) to the real-valued form
\begin{equation}\label{Eq5}
\mathcal{A} u=
 \begin{bmatrix}
W & -T \\
T & W
\end{bmatrix}
 \begin{bmatrix}
x \\
y
 \end{bmatrix}
 =
\begin{bmatrix}
p \\
q
\end{bmatrix}.
\end{equation}

\noindent Under our hypotheses, it can be easily proved that the matrix $\mathcal{A}$ is nonsingular. Recently, Salkuyeh et al. in \cite{Salkuyeh1} solved system (\ref{Eq5}) by the generalized successive overrelaxation (GSOR) iterative method. They split the coefficient matrix of the system (\ref{Eq5}) as
$$
 \mathcal{A}=\mathcal{D}-\mathcal{L}-\mathcal{U},
 $$
 where
 $$
 \mathcal{D}=
\begin{bmatrix}
W & 0 \\
0 & W
\end{bmatrix},\quad \mathcal{L}=
\begin{bmatrix}
0 & 0 \\
-T & 0
\end{bmatrix}, \quad \mathcal{U}=
\begin{bmatrix}
0 & T \\
0 & 0
\end{bmatrix}.
$$
So, for $0\neq \alpha \in \mathbb{R}$, they construct GSOR method  as
$$
\begin{bmatrix}
x^{k+1}\\
y^{k+1}
\end{bmatrix}=\mathcal{G}_{\alpha}
\begin{bmatrix}
x^{k}\\
y^{k}
\end{bmatrix}+\mathcal{C}_{\alpha}
\begin{bmatrix}
p\\
q
\end{bmatrix},
$$
where
$$
\mathcal{G}_{\alpha}={(\mathcal{D}-\alpha \mathcal{L})}^{-1}((1-\alpha)\mathcal{D}+\alpha \mathcal{U})=
\begin{bmatrix}
W & 0\\
\alpha T & W
\end{bmatrix}^{-1}
\begin{bmatrix}
(1-\alpha)W  &  \alpha T\\
0                  &  (1-\alpha)W
\end{bmatrix},
$$
and
$$
\mathcal{C}_{\alpha}=\alpha(\mathcal{D}-\alpha \mathcal{L})^{-1}=\alpha
\begin{bmatrix}
W & 0\\
\alpha T & W
\end{bmatrix}^{-1}.
$$
After some simplifications the GSOR method can be written as following.

\medskip

\noindent {\bf The GSOR iteration method}: \verb"Let" $(x^{(0)}; y^{(0)}) \in  {\mathbb{R }^{n}}$ \verb"be an initial guess. For" $k=0,1,2,\ldots$, \verb"until" $\{(x^{(k)};y^{(k)})\}$ \verb"converges, compute" ${(x^{(k+1)};y^{(k+1)})}$  \verb"according to the" \\ \verb"following sequence"
\begin{equation}\label{Eq6}
\begin{cases}
W x^{(k+1)}=(1-\alpha)Wx^{(k)}+\alpha T y^{(k)}+\alpha p, \\
Wy^{(k+1)}=-\alpha T x^{(k+1)}+(1-\alpha)W y^{(k)}+ \alpha q,
\end{cases}
\end{equation}
\verb"where" $\alpha$ \verb"is a given positive constant and" $I$ \verb"is the identity matrix".

In \cite{Salkuyeh1}, it has been shown that if $W$ and $T$ are symmetric positive definite and symmetric, respectively, then the GSOR method is convergent. Letting
$$
\mathcal{M}_{\alpha}=\frac{1}{\alpha}(\mathcal{D}-\alpha \mathcal{L}), \quad \mathcal{N}_{\alpha}=\frac{1}{\alpha}((1-\alpha)\mathcal{D}+\alpha \mathcal{U}),
$$
it holds that
$$
\mathcal{A}=\mathcal{M}_{\alpha}-\mathcal{N}_{\alpha}, \quad \mathcal{G}_{\alpha}={\mathcal{M}_{\alpha}}^{-1}\mathcal{N}_{\alpha}.
$$
Hence
$$
\mathcal{M}_{\alpha}=\frac{1}{\alpha}(\mathcal{D}-\alpha \mathcal{L}),
$$
can be used as a preconditioner for the system \eqref{Eq5}. As usual the multiplicative factor $1/\alpha$ can be neglected.

In this paper we propose a block lower triangular (BLT) preconditioner for the system \eqref{Eq5} and investigate the eigenvalues distribution of the preconditioned coefficient matrix. Then the preconditioned system is solved by the restarted version of the GMRES (GMRES($m$)) method.

The rest of the paper is organized as follows. In Section \ref{Sec2} our BLT preconditioner is introduced and its properties are investigated. Section \ref{Sec3} is devoted to some numerical experiments to show the effectiveness of  BLT preconditioner. Finally, some concluding remarks are given  in Section \ref{Sec4}.

\section{The new block lower triangular preconditioner} \label{Sec2}

We introduce a preconditioner of the form
\begin{equation}\label{Eq7}
\mathcal{G}_{\alpha}=
\begin{bmatrix}
W & 0\\
\alpha I & W
\end{bmatrix},
\end{equation}
for system \eqref{Eq5} and propose using the Krylov subspace method such as GMRES, to accelerate the convergence of the iteration, where $\alpha>0$. It is known that for the SPD problems, the rate of convergence of CG depends on the distribution of the eigenvalues of the system coefficient matrix. For nonsymmetric problems the eigenvalues may not describe  the convergence of nonsymmetric matrix iterations like GMRES. Nevertheless, a clustered spectrum (away from 0) often results  in rapid convergence \cite{Benzi3,Cao,BaiCAM}. Therefore, we need to examine the eigenvalues distribution of the matrix $\mathcal{G}_{\alpha}^{-1}\mathcal{A}$.
\begin{lem}\label{Lem1}
Let $W\in  \mathbb{R }^{n \times n}$ be symmetric positive definite and $T\in \mathbb{R}^{n \times n}$ be symmetric positive semidefinite. Let also $\lambda_k$ be an eigenvalue of the preconditioned matrix $\mathcal{G}_{\alpha}^{-1} \mathcal{A}$, and $u_k=(x_k;y_k)$ be the corresponding eigenvector. If $y_k=0$, then $\lambda_k=1$.
\end{lem}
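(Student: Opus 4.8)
The plan is to avoid forming $\mathcal{G}_\alpha^{-1}\mathcal{A}$ explicitly and instead work with the equivalent generalized eigenvalue problem. Since $W$ is symmetric positive definite, the block lower triangular matrix $\mathcal{G}_\alpha$ in \eqref{Eq7} is nonsingular, so the eigenrelation $\mathcal{G}_\alpha^{-1}\mathcal{A}u_k=\lambda_k u_k$ is equivalent to
\[
\mathcal{A}u_k=\lambda_k\,\mathcal{G}_\alpha u_k.
\]
First I would substitute the block forms of $\mathcal{A}$ from \eqref{Eq5} and of $\mathcal{G}_\alpha$ from \eqref{Eq7}, and read off the two block rows of this identity, obtaining the coupled system
\[
\begin{cases}
Wx_k-Ty_k=\lambda_k Wx_k,\\
Tx_k+Wy_k=\lambda_k(\alpha x_k+Wy_k).
\end{cases}
\]

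Next I would impose the hypothesis $y_k=0$. The first block row then collapses to $Wx_k=\lambda_k Wx_k$, i.e. $(1-\lambda_k)Wx_k=0$. The decisive observation is that $x_k\neq 0$: by definition an eigenvector $u_k=(x_k;y_k)$ is nonzero, and since the hypothesis forces $y_k=0$, the remaining component $x_k$ cannot also vanish. Because $W$ is positive definite it is invertible, so $Wx_k\neq 0$, and therefore the scalar factor must vanish, giving $1-\lambda_k=0$, that is $\lambda_k=1$.

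I do not expect any genuine obstacle here, as the conclusion follows from essentially a one-line cancellation. The only point that needs a little care is the bookkeeping that a nonzero eigenvector with $y_k=0$ has $x_k\neq0$, which is precisely what licenses cancelling $Wx_k$ in the first block row. The second block row plays no role in reaching the conclusion; with $y_k=0$ and $\lambda_k=1$ it merely reduces to the compatibility condition $Tx_k=\alpha x_k$, which is consistent with the value just found and hence imposes no further restriction on $\lambda_k$.
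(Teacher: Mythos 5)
Your proposal is correct and follows essentially the same route as the paper: both pass to the generalized eigenproblem $\mathcal{A}u_k=\lambda_k\mathcal{G}_\alpha u_k$, extract the block equations \eqref{Eq8}, and conclude from $(1-\lambda_k)Wx_k=0$ with $W$ positive definite and $x_k\neq 0$ (since a nonzero eigenvector with $y_k=0$ forces $x_k\neq 0$) that $\lambda_k=1$. Your closing remark that the second block row merely yields the compatibility condition $Tx_k=\alpha x_k$ is a correct observation the paper omits, but it changes nothing in the argument.
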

\begin{proof}
We have $\mathcal{G}_{\alpha}^{-1}\mathcal{A}u_k=\lambda_k u_k$, or
\[
\begin{bmatrix}
W & -T \\
T  & W
\end{bmatrix}
\begin{bmatrix}
x_k \\
y_k
\end{bmatrix}=\lambda_k
\begin{bmatrix}
W  & 0\\
\alpha I & W
\end{bmatrix}
\begin{bmatrix}
x_k \\
y_k
\end{bmatrix},
\]
which is itself  equivalent to
\begin{equation}\label{Eq8}
    \begin{cases}
      Wx_k-Ty_k=\lambda_k Wx_k, \\
      Tx_k+Wy_k=\alpha \lambda_k x_k+\lambda_k Wy_k.
    \end{cases}
\end{equation}
If $y_k=0$, then from the first equation of \eqref{Eq8} we get $(1-\lambda_k)Wx_k=0$. Since $W$ is symmetric positive definite, we see that $x_k=0$ or $\lambda_k=1$. If $x_k=0$ then $u_k=0$, which is a contradiction. Hence $\lambda_k=1$.
\end{proof}

\begin{lem}\label{Lem2}
Let $W\in  \mathbb{R }^{n \times n}$ and $T \in  \mathbb{R }^{n \times n}$ be symmetric positive definite and symmetric positive semidefinite, respectively, and $(\lambda_k,u_k=(x_k;y_k))$ be an eigenpair of the preconditioned matrix $\mathcal{G}_{\alpha}^{-1}\mathcal{A}$. For $y_k\neq 0$,  set \begin{equation}\label{Eq9}
  a_k=\frac{y_k^{*}Ty_k}{y_k^* y_k},\quad
  b_k=\frac{y_k^{*}TW^{-2}Ty_k}{y_k^* y_k},\quad
  c_k=\frac{y_k^{*}TW^{-1}T W^{-1}Ty_k}{y_k^* y_k}.
 \end{equation}
Then, $\lambda_k=1$ or
\begin{equation}\label{Eq10}
  \lambda_k-1=\frac{\alpha b_k \pm \sqrt{\Delta(\alpha)}}{2a_k},
  \end{equation}
where  $\Delta(\alpha)=\alpha^{2}b_k^{2}+4a_k(\alpha b_k-c_k)$.
\end{lem}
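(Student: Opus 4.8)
The plan is to work directly from the eigenvalue relations in \eqref{Eq8}, treating $\lambda_k=1$ as a degenerate branch and deriving the quadratic for the remaining eigenvalues. First I would assume $\lambda_k\neq1$. Then the first equation of \eqref{Eq8}, written as $(1-\lambda_k)Wx_k=Ty_k$, can be solved for $x_k$ because $W$ is symmetric positive definite and hence invertible, giving $x_k=\frac{1}{1-\lambda_k}W^{-1}Ty_k$. This is the step that lets the whole argument reduce to a relation purely in $y_k$.

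Next I would substitute this expression for $x_k$ into the second equation of \eqref{Eq8} and multiply through by $(1-\lambda_k)$ to clear the denominator. After collecting the terms proportional to $Wy_k$ on one side, I expect to reach a single vector identity of the form $TW^{-1}Ty_k-\alpha\lambda_k W^{-1}Ty_k=-(1-\lambda_k)^2 Wy_k$, in which only $y_k$ together with the data $W,T,\alpha,\lambda_k$ appears.

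The decisive move is to convert this vector identity into a scalar one whose coefficients are exactly $a_k,b_k,c_k$. I would left-multiply by $y_k^{*}TW^{-1}$: the three resulting quadratic forms are $y_k^{*}TW^{-1}TW^{-1}Ty_k$, $y_k^{*}TW^{-2}Ty_k$, and $y_k^{*}Ty_k$ (the last using $W^{-1}W=I$ on the right-hand side), which upon division by $y_k^{*}y_k$ become $c_k$, $b_k$, and $a_k$ respectively. This yields $c_k-\alpha\lambda_k b_k=-(1-\lambda_k)^2 a_k$; writing $s=\lambda_k-1$ and rearranging gives the scalar quadratic $a_k s^2-\alpha b_k s+(c_k-\alpha b_k)=0$, and the quadratic formula produces exactly \eqref{Eq10} with the stated $\Delta(\alpha)$.

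The main obstacle, and the only genuinely delicate point, is identifying the correct left multiplier $y_k^{*}TW^{-1}$ that produces precisely the forms defining $a_k,b_k,c_k$ rather than some other combination; a different choice fails to close the system in these three scalars. A secondary point to address is the validity of the final division by $2a_k$: when $a_k=0$ the positive semidefiniteness of $T$ forces $Ty_k=0$, and a short separate inspection of \eqref{Eq8} then shows $\lambda_k=1$, so that case falls under the first alternative of the conclusion and the formula \eqref{Eq10} is invoked only when $a_k\neq0$.
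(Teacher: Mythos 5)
Your proposal is correct and follows essentially the same route as the paper's own proof: solving the first equation of \eqref{Eq8} for $x_k$ when $\lambda_k\neq 1$, substituting into the second, left-multiplying by $y_k^{*}TW^{-1}$ to obtain the scalar quadratic $a_k(1-\lambda_k)^2=\alpha\lambda_k b_k-c_k$, and treating the degenerate case $a_k=0$ (where $Ty_k=0$ forces $\lambda_k=1$) separately before applying the quadratic formula. The derivation, the choice of multiplier, and the case analysis all match the paper's argument.
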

\begin{proof}
As we saw in Lemma \ref{Lem1}, $\mathcal{G}_{\alpha}^{-1}\mathcal{A}u_k=\lambda_ku_k$ is equivalent to (\ref{Eq8}). It follows from the first equation of \eqref{Eq8} that $(1-\lambda_k)Wx_k=Ty_k$. If $\lambda_k\neq 1$, then we get
\[
x_k=\frac{1}{1-\lambda_k} W^{-1}Ty_k.
\]
Substituting $x_k$ into the second equation of \eqref{Eq8} yields
$$
(1-\lambda_k)Wy_k=\alpha \frac{\lambda_k}{1-\lambda_k}W^{-1}Ty_k-\frac{1}{1-\lambda_k}TW^{-1}Ty_k.
$$
Simplifying this equation gives
\begin{equation}\label{Eq11}
(1-\lambda_k)^2 Wy_k=\alpha \lambda_k W^{-1}Ty_k-TW^{-1}Ty_k.
\end{equation}
According to Lemma \ref{Lem1} if $y_k=0$, then $\lambda_k=1$ and there is nothing to prove. Otherwise, multiplying both sides of \eqref{Eq11} by ${y_k}^{*}TW^{-1}$ results in
$$
(1-\lambda_k)^{2}  \frac{{y_k}^{*}Ty_k}{{y_k}^{*}y_k}=\alpha \lambda_k \frac{{y_k}^{*}TW^{-2}Ty_k}{{y_k}^{*}y_k}-\frac{{y_k}^{*}TW^{-1}TW^{-1}Ty_k}{{y_k}^{*}y_k}.
$$
Now from the latter equation and Eq. \eqref{Eq9}, we get
\begin{equation}\label{Eq12}
a_k(1-\lambda_k)^2=\alpha \lambda_k b_k-c_k.
\end{equation}
Obviously, both of the matrices $TW^{-2}T$ and $TW^{-1}T W^{-1}T$ are symmetric positive semidefinite. Therefore, we deduce that $a_k,b_k,c_k\geq 0$.  We consider the following two cases.
\begin{itemize}
\item If $a_k=0,$ then from \eqref{Eq9}, we have ${y_k}^{*}Ty_k=0$. Hence $Ty_k=0$ and therefore from the first equation in \eqref{Eq8} we obtain $\lambda_k=1$ or $x_k=0$. If $x_k=0$ then from the second equation in \eqref{Eq8}, $(1-\lambda_k)W y_k=0$. This implies that $\lambda_k=1$, since $W$ is nonsingular and $y_k \neq0$.

\item If $a_k \neq 0$, then by solving the quadratic equation \eqref{Eq12} we get
\[
\lambda_k-1=\frac{\alpha b_k \pm \sqrt{\Delta(\alpha)}}{2a_k},
\]
where  $\Delta(\alpha)=\alpha^{2}b_k^{2}+4a_k(\alpha b_k-c_k).$
\end{itemize}
 Therefore, the proof is complete.
\end{proof}
\rm \begin{thm}\label{Thm1}
Let $W$ and $T \in  \mathbb{R }^{n \times n}$ be symmetric positive definite and symmetric positive semidefinite, respectively. Let also
\begin{equation}\label{Eq13}
0<\alpha \leq \min\left\{\frac{2}{b_k}(\sqrt{a_k(a_k+c_k)}-a_k):~b_k \neq 0\right\}.
\end{equation}
Then the eigenvalues of the matrix $\mathcal{G}_{\alpha}^{-1} \mathcal{A}$ are enclosed in a circle of radius $\sqrt{{(c_k-\alpha b_k)}/{a_k}}$ centered at (1,0) where $a_k$, $b_k$ and $c_k$ were defined in \eqref{Eq9}.
\end{thm}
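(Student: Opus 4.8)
The plan is to merge Lemmas \ref{Lem1} and \ref{Lem2} to split the spectrum, and then to show that the bound \eqref{Eq13} is exactly the condition forcing the discriminant $\Delta(\alpha)$ in \eqref{Eq10} to be nonpositive, so that every non-unit eigenvalue lands precisely on the stated circle. First I would dispose of the trivial eigenvalues. By Lemma \ref{Lem1}, any eigenvector with $y_k=0$ gives $\lambda_k=1$, which sits at the centre $(1,0)$; and in the proof of Lemma \ref{Lem2} the subcase $a_k=0$ also forces $\lambda_k=1$. I would also record the equivalence $a_k=0 \Leftrightarrow b_k=0 \Leftrightarrow c_k=0$: writing $b_k=\|W^{-1}Ty_k\|_2^2/\|y_k\|_2^2$ (using that $W^{-1}$ is symmetric), $b_k=0$ forces $Ty_k=0$, whence $a_k=c_k=0$. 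Thus the only eigenvalues that can differ from $1$ arise from eigenvectors with $a_k,b_k>0$, and for these \eqref{Eq10} applies with $a_k\neq 0$.

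The crux is the sign of $\Delta(\alpha)=\alpha^2 b_k^2+4a_k(\alpha b_k-c_k)$. Viewing this as a quadratic in $\alpha$ with positive leading coefficient $b_k^2$, I would solve $\Delta(\alpha)\le 0$. Its two roots are
\[
\alpha_\pm=\frac{2}{b_k}\bigl(\pm\sqrt{a_k(a_k+c_k)}-a_k\bigr),
\]
so $\Delta(\alpha)\le 0$ exactly on $[\alpha_-,\alpha_+]$, and in particular for every $\alpha$ with $0<\alpha\le \alpha_+=\frac{2}{b_k}\bigl(\sqrt{a_k(a_k+c_k)}-a_k\bigr)$. Taking the minimum of this per-eigenvector threshold over all $k$ with $b_k\neq 0$ — which is precisely the bound \eqref{Eq13} — guarantees $\Delta(\alpha)\le 0$ simultaneously for every relevant eigenpair.

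Finally, with $\Delta(\alpha)\le 0$ the two values in \eqref{Eq10} are complex conjugates (or a repeated real root), so I would compute the modulus directly from \eqref{Eq10}:
\begin{equation*}
|\lambda_k-1|^2=\frac{\alpha^2 b_k^2+\bigl(-\Delta(\alpha)\bigr)}{4a_k^2}=\frac{\alpha^2 b_k^2-\Delta(\alpha)}{4a_k^2}=\frac{4a_k(c_k-\alpha b_k)}{4a_k^2}=\frac{c_k-\alpha b_k}{a_k}.
\end{equation*}
Hence $|\lambda_k-1|=\sqrt{(c_k-\alpha b_k)/a_k}$, which places $\lambda_k$ on the circle of radius $\sqrt{(c_k-\alpha b_k)/a_k}$ about $(1,0)$; together with the unit eigenvalues at the centre this yields the claim. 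As a built-in consistency check, $\Delta(\alpha)\le 0$ combined with $a_k>0$ immediately gives $c_k-\alpha b_k\ge 0$, so the radius is real.

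I expect the difficulty here to be bookkeeping rather than conceptual. The one point requiring care is matching the positive root $\alpha_+$ of the discriminant quadratic with the threshold in \eqref{Eq13}, and correctly handling the degenerate eigenvectors: those with $b_k=0$ (equivalently $Ty_k=0$) give $\lambda_k=1$ and must be excluded from the minimum in \eqref{Eq13}, while the possibility $c_k=0$ with $b_k\neq 0$ would collapse the threshold to $0$ and is tacitly ruled out by requiring the bound \eqref{Eq13} to be a positive number.
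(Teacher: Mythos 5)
Your proposal is correct and follows essentially the same route as the paper's proof: both reduce to the quadratic \eqref{Eq12} from Lemma \ref{Lem2}, identify \eqref{Eq13} as exactly the condition $\Delta(\alpha)\le 0$ (the paper computes the same interval endpoints $\alpha_1,\alpha_2$, your $\alpha_\pm$), and obtain the modulus $\vert\lambda_k-1\vert=\sqrt{(c_k-\alpha b_k)/a_k}$ for the complex-conjugate roots. Your explicit observation that $a_k=0\Leftrightarrow b_k=0\Leftrightarrow c_k=0$ is a small tidy-up of the paper's slightly looser handling of the degenerate case (it merely notes $\Delta(\alpha)=-4a_kc_k\le 0$ when $b_k=0$), but it does not change the argument.
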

\begin{proof}
Let $\lambda_{k}$ be an eigenvalue of the matrix $\mathcal{G}^{-1} \mathcal{A}$. From Lemma \ref{Lem2}, we have $\lambda_k=1$ or
$\lambda_k-1=({\alpha b_k \pm \sqrt{\Delta(\alpha)}})/{2a_k}$. If  $\lambda_k=1$, there is nothing to prove. Otherwise, if $\Delta(\alpha)>0$ then the quadratic equation \eqref{Eq12} has two roots $\lambda_k^+$ and $\lambda_k^-$ which satisfy
\[
|\lambda_k^{+}-1|=\frac{|\alpha b_k + \sqrt{\Delta(\alpha)}|}{2a_k}=:r_1,\quad |\lambda_k^{-}-1|=\frac{|\alpha b_k - \sqrt{\Delta(\alpha)}|}{2a_k}=:r_2.
\]
In this case, we deduce that these eigenvalues are enclosed in a circle of radius $\max\{r_1,r_2\}=r_1$ centered at $1$.
On the other hand, if $\Delta(\alpha) \leq 0$, then from Eq. \eqref{Eq10}, we have
$$
\lambda_k^{\pm}-1=\frac{\alpha b_k \pm i\sqrt{-\alpha^{2}b_k^{2}+4a_k(c_k-\alpha b_k)}}{2a_k},
$$
which gives
\begin{equation}\label{Eq14}
\vert{\lambda_k^{\pm}-1}\vert=\sqrt{\frac{c_k-\alpha b_k}{a_k}}=:r.
\end{equation}
This means that the eigenvalues of the preconditioned matrix are enclosed in a circle of radius $r$ centered at $1$. It is not difficult to see that $r\leq r_1$. Therefore, to have a more clustered eigenvalues around $1$ we should choose the parameter $\alpha$ such that $\Delta(\alpha)\leq 0$.
We have $\Delta(\alpha)\leq 0 $ if and only if $\alpha\in[\alpha_1,\alpha_2]$, where
\[
\alpha_1=\frac{-2}{b_k}\left(a_k+\sqrt{a_k(a_k+c_k)}\right)\quad \textrm{and} \quad  \alpha_2=\frac{-2}{b_k}\left(a_k-\sqrt{a_k(a_k+c_k)}\right).
\]
It is necessary to mention that if $b_k=0$, then we have $\Delta(\alpha)=-4a_kc_k\leq 0$.
Since $a_k,b_k$ and $c_k$ are nonnegative, we deduce that $\alpha_1$ is nonpositive and $\alpha_2$ is nonnegative. Therefore, having in mind that $\alpha>0$ it is enough to choose $\alpha\in (0,\alpha_2]$.  Obviously, if we consider the above result over all the eigenpairs the desired result is obtained.
\end{proof}


\begin{thm}\rm
Let $W$ and $T \in  \mathbb{R }^{n \times n}$ be symmetric positive definite and symmetric positive semidefinite, respectively and $0 < \nu_1 \le \nu_2 \le \cdots \le \nu_n$ and $0 \le \mu_1 \le \mu_2 \le \cdots \le \mu_n \neq 0$ be the eigenvalues of $W$ and $T$, respectively. If $\alpha$ satisfies (\ref{Eq13}), then the eigenvalues of $\mathcal{G}^{-1}_{\alpha}\mathcal{A}$ lie in the following disk
\begin{equation}\label{Eq15}
\vert{\lambda-1}\vert \le \sqrt{\frac{\mu^3_n \nu^2_n-\alpha {\mu^2_1} {\nu^2_1}}{\nu^2_1 \nu^2_n  \mu_1}}.
\end{equation}
\end{thm}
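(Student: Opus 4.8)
The plan is to start from the radius estimate already obtained in Theorem~\ref{Thm1}: under the stated restriction \eqref{Eq13} on $\alpha$ we have $\Delta(\alpha)\le 0$ for every eigenpair, so each eigenvalue with $a_k\neq 0$ satisfies $|\lambda_k-1|=r_k$ with $r_k^2=(c_k-\alpha b_k)/a_k\ge 0$, while the remaining eigenvalues equal $1$ and lie trivially in any disk centered at $1$. Thus it suffices to bound $r_k^2$ \emph{uniformly} in $k$ by replacing $a_k,b_k,c_k$ with estimates expressed through the extreme eigenvalues $\nu_1,\nu_n$ of $W$ and $\mu_1,\mu_n$ of $T$.

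Each of $a_k,b_k,c_k$ is a Rayleigh quotient, so the first concrete step is to bound them by the spectra of $W$ and $T$. For $a_k$ this is immediate: $a_k=y_k^*Ty_k/y_k^*y_k\ge \mu_1$. For the other two I will factor out the vector $Ty_k$ and handle the normalization by $y_k$ separately. Writing $b_k=(Ty_k)^*W^{-2}(Ty_k)/y_k^*y_k$ and using $W^{-2}\succeq \nu_n^{-2}I$ together with $\|Ty_k\|^2/\|y_k\|^2=y_k^*T^2y_k/y_k^*y_k\ge\mu_1^2$ gives $b_k\ge \mu_1^2/\nu_n^2$. Similarly $c_k=(Ty_k)^*W^{-1}TW^{-1}(Ty_k)/y_k^*y_k$, and I will bound the inner quadratic form by the largest eigenvalue of the symmetric matrix $W^{-1}TW^{-1}$; since $u^*W^{-1}TW^{-1}u=(W^{-1}u)^*T(W^{-1}u)\le \mu_n\|W^{-1}u\|^2\le (\mu_n/\nu_1^2)\|u\|^2$, that largest eigenvalue is at most $\mu_n/\nu_1^2$. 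Combined with $\|Ty_k\|^2/\|y_k\|^2\le\mu_n^2$ this yields $c_k\le \mu_n^3/\nu_1^2$.

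With these three one-sided bounds in hand, the last step assembles them. Because $\Delta(\alpha)\le 0$ forces $c_k-\alpha b_k\ge 0$, enlarging the numerator to its upper bound and shrinking the denominator to $\mu_1$ both increase the quotient, so
\[
r_k^2=\frac{c_k-\alpha b_k}{a_k}\le\frac{\mu_n^3/\nu_1^2-\alpha\mu_1^2/\nu_n^2}{\mu_1}=\frac{\mu_n^3\nu_n^2-\alpha\mu_1^2\nu_1^2}{\mu_1\nu_1^2\nu_n^2},
\]
and taking square roots gives exactly \eqref{Eq15}; as the right-hand side no longer depends on $k$, the same disk contains the whole spectrum.

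The step I expect to be the main obstacle is the estimate for $c_k$: unlike $a_k$ and $b_k$, it involves the triple product $TW^{-1}TW^{-1}T$, and its Rayleigh quotient is normalized by $y_k$ rather than by the natural vector $Ty_k$. The care needed is to split the bound into a spectral estimate for the sandwiched matrix $W^{-1}TW^{-1}$ and a separate estimate for the norm ratio $\|Ty_k\|/\|y_k\|$, checking that the direction of every inequality is consistent with producing an \emph{upper} bound on $r_k^2$. One should also note that the bound is only informative when $\mu_1>0$; if $\mu_1=0$ the right-hand side of \eqref{Eq15} is infinite and the claim holds vacuously.
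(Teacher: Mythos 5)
Your proof is correct and follows essentially the same route as the paper's: you start from Theorem~\ref{Thm1}, which gives $|\lambda_k-1|^2=(c_k-\alpha b_k)/a_k$ when $\Delta(\alpha)\le 0$, and then bound $a_k\ge\mu_1$, $b_k\ge\mu_1^2/\nu_n^2$ and $c_k\le\mu_n^3/\nu_1^2$ by Rayleigh-quotient (Courant--Fischer) estimates, which are exactly the one-sided halves of the paper's Eqs.~(\ref{Eq16})--(\ref{Eq18}). Your two added observations---that $\Delta(\alpha)\le 0$ forces $c_k-\alpha b_k\ge 0$, so enlarging the numerator and shrinking the denominator are legitimate monotone substitutions, and that the bound is vacuous when $\mu_1=0$---are minor points of care that the paper leaves implicit, not a different method.
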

\begin{proof} Since the matrix $T$ is symmetric, using the Courant-Fisher min–max theorem \cite{Axe}, we get
\begin{equation}\label{Eq16}
   \mu_1 \le a_k \le \mu_n.
\end{equation}
On the other hand,
\[
b_k=\frac{y_k^{*}TW^{-2}Ty_k}{y_k^* y_k}=\frac{{y_k}^{*}TW^{-2}Ty_k}{{y_k}^{*}T Ty_k}\frac{{y_k}^{*}T Ty_k}{{y_k}^{*}y_k}.
\]
Letting  $z_k=T y_{k}$, again by using the Courant-Fisher min–max theorem it follows that
\[
\lambda_{\min} (W^{-2}) \lambda_{\min} (T^2)  \le b_k=\frac{{z_k}^{*}W^{-2}z_k}{{z_k}^{*}z_k} \frac{{y_k}^{*}T^2 y_k}{{y_k}^{*}y_k} \le \lambda_{\max} (W^{-2}) \lambda_{\max} (T^2),
\]
where $\lambda_{\min} (\cdot)$ and $\lambda_{\max} (\cdot)$ stand for the smallest and largest eigenvalues of the matrix, respectively. Hence,
\[
\lambda^2_{\min} (W^{-1}) {\mu^2_1} \le b_k \le \lambda^2_{\max} (W^{-1}) {\mu^2_n},
\]
and therefore
\begin{equation}\label{Eq17}
(\frac{\mu_1}{\nu_n})^2 \le b_k \le (\frac{\mu_n}{\nu_1})^2.
\end{equation}
Similarly, it is easy to see that
\begin{equation}\label{Eq18}
   \frac{\mu^3_1}{\nu^2_n} \le c_k \le \frac{\mu^3_n}{\nu^2_1}.
\end{equation}
Now from Theorem \ref{Thm1} and Eqs. (\ref{Eq16}), (\ref{Eq17}) and (\ref{Eq18}) the desired result is obtained.
\end{proof}


\begin{remark}\rm
Assuming $\alpha_k=\frac{2}{b_k}(\sqrt{a_k(a_k+c_k)}-a_k)$, from Eqs. (\ref{Eq16}), (\ref{Eq17}), (\ref{Eq18}) we obtain
\[
\alpha_k=\frac{2}{b_k} \frac{a_k c_k}{\sqrt{a_k(a_k+c_k)}+a_k} \ge \frac{2\nu^3_1 \mu^4_1}{\nu^2_n \mu^3_n(\sqrt{\nu^2_1+\mu^2_n}+\nu_1)}=\alpha^*.
\]
for all $k$. Therefore, Eq. (\ref{Eq13}) can be replaced by $0<\alpha \le \alpha^{*}$.
\end{remark}
\begin{remark}\rm
If we choose
\[
\alpha=\tilde{\alpha}=\frac{\mu^3_n \nu^2_n}{\nu^2_1 \mu^2_1},
\]
then the right-hand side of Eq. (15) becomes zero. However, it may not belong to the interval $(0,\alpha^*]$.
Hence, we select the nearest $\alpha$ in interval $(0,\alpha^{*}]$ to $\tilde{\alpha}$.
\end{remark}

\section{Numerical experiments}\label{Sec3}

We use three test problems from \cite{Bai1} and an example of \cite{Yang}, to illustrate the feasibility and effectiveness of the BLT preconditioner when it is employed as a perconditioner for the GMRES method  to solve the real system \eqref{Eq5}. To do so, we compare the numerical results of the restarted generalized minimal residual (GMRES($m$)) method in conjunction with the BLT preconditioner with those of the MHSS and the GSOR preconditioners and the restarted GMRES method without preconditioning.
Numerical results are compared in terms of  both the number of iterations and the CPU which are respectively denoted by ``IT" and "CPU" in the tables.
In Tables, a dagger $(\dag)$ means that the method fails to convergence in 500 iterations.
In all the tests we use a zero vector as an initial guess and stopping criterion
$$
\frac{\parallel b-A u^{(k)} \parallel_{2}}{\parallel b\parallel_{2}}<10^{-10}
$$
is always used, where $u^{(k)}=x^{(k)}+iy^{(k)}$. In the implementation of the preconditioners to solve the inner symmetric positive definite system of linear equations we use the sparse Cholesky factorization incorporated with the symmetric approximate minimum degree reordering \cite{saad}. To do so we have used the \verb"symamd.m" command of \textsc{Matlab}. All runs are implemented in \textsc{Matlab} R2014b with a personal computer with 2.40 GHz central processing unit (Intel(R) Core(TM) i7-5500), 8 GB memory and Windows 10 operating system.

\begin{example}\label {Ex1}\rm
(see \cite{Bai1}) Consider the system of linear equations
\begin{equation}
[(K+\frac{3-\sqrt{3}}{\tau})+i\big(K+\frac{3+\sqrt{3}}{\tau}I)]x=b,
\end{equation}
where $\tau$ is the time step-size and $K$ is the five-point centered difference matrix approximating the negative Laplacian operator $L=-\Delta$ with homogeneous Dirichlet boundary conditions, on a uniform mesh in the unit square $[0, 1] \times [0, 1]$ with the mesh-size $h=1/(m + 1)$. The matrix $K\in\mathbb{R }^{n \times n}$ possesses the tensor-product form $K=I\otimes V_m + V_m \otimes I$, with $V_m=h^{-2} tridiag (−1, 2,−1)\in \mathbb{R }^{m \times m}$. Hence, $K$ is an $n \times n$ block-tridiagonal matrix, with $n = m^2$. We take
$$
W=K+\frac{3-\sqrt{3}}{\tau}I, \quad \textrm{and}  \quad T=K+\frac{3+\sqrt{3}}{\tau}I
$$
and the right-hand side vector $b$ with its jth entry $b_j$ being given by
$$
b_j=\frac{(1-i)j}{\tau(j+1)^2},  \quad  j=1,2, \ldots ,n.
$$
In our tests, we take $\tau= h$. Furthermore, we normalize coefficient matrix and right-hand side by multiplying both by $h^2$.
\end{example}

\begin{example}\label {Ex2}\rm
(See \cite{Bai1}) Consider the system of linear  equations $\eqref{Eq1} $ as following
$$
\big[ (-\omega ^2 M+K) +i(\omega C_V +C_ H) \big] =b,
$$
where $M$ and $K$ are the inertia and the stiffness matrices, $C_V$ and $C_H$ are the viscous and the hysteretic damping matrices, respectively, and $ \omega$ is the driving circular frequency. We take $C_H = \mu K$ with $\mu$ a damping coefficient, $M = I, C_V = 10I$, and $K$ the five-point centered difference matrix approximating the negative Laplacian operator with homogeneous Dirichlet boundary conditions, on a uniform mesh in the unit square $[0, 1] \times [0, 1]$ with the mesh-size $h =1/(m+1)$. The matrix $K \in \mathbb{R }^{n \times n}$ possesses the tensor-product form $K = I \otimes V_m+V_m \otimes I$, with $V_m =h^{-2} tridiag(-1, 2,-1) \in\mathbb{R }^{m \times m} $. Hence, $K$ is an $n \times n$ block-tridiagonal matrix, with $n = m^2$. In addition, we set $\mu = 8$, $\omega = \pi$, and the right-hand side vector b to be $b = (1 + i)A\textbf{1}$, with \textbf{1} being the vector of all entries equal to 1. As before, we normalize the system by multiplying both sides through by $h^2$.
\end{example}

\begin{table}[!ht]
\caption{Numerical results for  Example \ref{Ex1}.}\label{Table1}
\begin{tabular}{ccc|c|cc|cc|cc|cc|cc|cc|cc|} \hline
\multicolumn{1}{c}{Method}   &  \multicolumn{1}{c}{}   &  \multicolumn{2}{c}{$m=32$}  &  \multicolumn{2}{c}{$m=64$}  & \multicolumn{2}{c}{$m=128$}  & \multicolumn{2}{c}{$m=256$}  & \multicolumn{2}{c}{$m=512$}   & \multicolumn{2}{c}{$m=1024$}\\ [2mm] \hline

\multicolumn{1}{c}{GMRES(5)}  &  \multicolumn{1}{c}{IT} & \multicolumn{2}{c}{349}  & \multicolumn{2}{c}{\dag}   & \multicolumn{2}{c}{\dag} & \multicolumn{2}{c}{\dag}  & \multicolumn{2}{c}{\dag}  & \multicolumn{2}{c}{\dag} \\

 \multicolumn{1}{c}{}  &  \multicolumn{1}{c}{CPU} & \multicolumn{2}{c}{0.39}  & \multicolumn{2}{c}{-}   & \multicolumn{2}{c}{-} & \multicolumn{2}{c}{-}  & \multicolumn{2}{c}{-}  & \multicolumn{2}{c}{-} \\ \\

 \multicolumn{1}{c}{MHSS-GMRES(5)}  &  \multicolumn{1}{c}{$\alpha_{opt}$} & \multicolumn{2}{c}{10}  & \multicolumn{2}{c}{9.1}   & \multicolumn{2}{c}{4.7} & \multicolumn{2}{c}{5.1}  & \multicolumn{2}{c}{10.5}  & \multicolumn{2}{c}{} \\

 \multicolumn{1}{c}{}  &  \multicolumn{1}{c}{IT} & \multicolumn{2}{c}{54}  & \multicolumn{2}{c}{26}   & \multicolumn{2}{c}{71} & \multicolumn{2}{c}{114}  & \multicolumn{2}{c}{179}  & \multicolumn{2}{c}{} \\

 \multicolumn{1}{c}{}  &  \multicolumn{1}{c}{CPU} & \multicolumn{2}{c}{0.73}  & \multicolumn{2}{c}{0.12}   & \multicolumn{2}{c}{6.01} & \multicolumn{2}{c}{50.84}  & \multicolumn{2}{c}{541.41}  & \multicolumn{2}{c}{} \\   \\

 \multicolumn{1}{c}{GSOR-GMRES(5)}  &  \multicolumn{1}{c}{$\alpha_{opt}$} & \multicolumn{2}{c}{0.037}  & \multicolumn{2}{c}{0.457}   & \multicolumn{2}{c}{0.432} & \multicolumn{2}{c}{0.418}  & \multicolumn{2}{c}{0.412}  & \multicolumn{2}{c}{0.411} \\

 \multicolumn{1}{c}{}  &  \multicolumn{1}{c}{IT} & \multicolumn{2}{c}{23}  & \multicolumn{2}{c}{25}   & \multicolumn{2}{c}{26} & \multicolumn{2}{c}{26}  & \multicolumn{2}{c}{27}  & \multicolumn{2}{c}{27} \\

 \multicolumn{1}{c}{}  &  \multicolumn{1}{c}{CPU} & \multicolumn{2}{c}{0.11}  & \multicolumn{2}{c}{0.43}   & \multicolumn{2}{c}{2.57} & \multicolumn{2}{c}{14.92}  & \multicolumn{2}{c}{83.01}  & \multicolumn{2}{c}{413.93} \\   \\

 \multicolumn{1}{c}{BLTP-GMRES(5)}  &  \multicolumn{1}{c}{$\alpha_{opt}$} & \multicolumn{2}{c}{1.4}  & \multicolumn{2}{c}{1.4}   & \multicolumn{2}{c}{1.5} & \multicolumn{2}{c}{1.5}  & \multicolumn{2}{c}{1.5}  & \multicolumn{2}{c}{1.5} \\

 \multicolumn{1}{c}{}  &  \multicolumn{1}{c}{IT} & \multicolumn{2}{c}{6}  & \multicolumn{2}{c}{7}   & \multicolumn{2}{c}{7} & \multicolumn{2}{c}{7}  & \multicolumn{2}{c}{7}  & \multicolumn{2}{c}{7} \\

 \multicolumn{1}{c}{}  &  \multicolumn{1}{c}{CPU} & \multicolumn{2}{c}{0.02}  & \multicolumn{2}{c}{0.1}   & \multicolumn{2}{c}{0.59} & \multicolumn{2}{c}{3.51}  & \multicolumn{2}{c}{18.98}  & \multicolumn{2}{c}{98.37} \\   \hline

\end{tabular}
\label{Tbl1}
\caption{Numerical results for  Example \ref{Ex2}.}
\begin{tabular}{ccc|c|cc|cc|cc|cc|cc|cc|cc|} \hline

\multicolumn{1}{c}{Method}   &  \multicolumn{1}{c}{}   &  \multicolumn{2}{c}{$m=32$}  &  \multicolumn{2}{c}{$m=64$}  & \multicolumn{2}{c}{$m=128$}  & \multicolumn{2}{c}{$m=256$}  & \multicolumn{2}{c}{$m=512$}   & \multicolumn{2}{c}{$m=1024$}\\ [2mm] \hline

\multicolumn{1}{c}{GMRES(5)}  &  \multicolumn{1}{c}{IT} & \multicolumn{2}{c}{\dag}  & \multicolumn{2}{c}{\dag}   & \multicolumn{2}{c}{\dag} & \multicolumn{2}{c}{\dag}  & \multicolumn{2}{c}{\dag}  & \multicolumn{2}{c}{\dag} \\

 \multicolumn{1}{c}{}  &  \multicolumn{1}{c}{CPU} & \multicolumn{2}{c}{-}  & \multicolumn{2}{c}{-}   & \multicolumn{2}{c}{-} & \multicolumn{2}{c}{-}  & \multicolumn{2}{c}{-}  & \multicolumn{2}{c}{-} \\ \\

 \multicolumn{1}{c}{MHSS-GMRES(5)}  &  \multicolumn{1}{c}{$\alpha_{opt}$} & \multicolumn{2}{c}{81}  & \multicolumn{2}{c}{110}   & \multicolumn{2}{c}{-} & \multicolumn{2}{c}{-}  & \multicolumn{2}{c}{-}  & \multicolumn{2}{c}{-} \\

 \multicolumn{1}{c}{}  &  \multicolumn{1}{c}{IT} & \multicolumn{2}{c}{73}  & \multicolumn{2}{c}{243}   & \multicolumn{2}{c}{\dag} & \multicolumn{2}{c}{\dag}  & \multicolumn{2}{c}{\dag}  & \multicolumn{2}{c}{\dag} \\

 \multicolumn{1}{c}{}  &  \multicolumn{1}{c}{CPU} & \multicolumn{2}{c}{0.29}  & \multicolumn{2}{c}{4.65}   & \multicolumn{2}{c}{-} & \multicolumn{2}{c}{-}  & \multicolumn{2}{c}{-}  & \multicolumn{2}{c}{-} \\   \\

 \multicolumn{1}{c}{GSOR-GMRES(5)}  &  \multicolumn{1}{c}{$\alpha_{opt}$} & \multicolumn{2}{c}{0.099}  & \multicolumn{2}{c}{0.099}   & \multicolumn{2}{c}{0.099} & \multicolumn{2}{c}{0.099}  & \multicolumn{2}{c}{0.099}  & \multicolumn{2}{c}{0.099} \\

 \multicolumn{1}{c}{}  &  \multicolumn{1}{c}{IT} & \multicolumn{2}{c}{65}  & \multicolumn{2}{c}{70}   & \multicolumn{2}{c}{71} & \multicolumn{2}{c}{67}  & \multicolumn{2}{c}{63}  & \multicolumn{2}{c}{61} \\

 \multicolumn{1}{c}{}  &  \multicolumn{1}{c}{CPU} & \multicolumn{2}{c}{0.23}  & \multicolumn{2}{c}{1.07}   & \multicolumn{2}{c}{6.97} & \multicolumn{2}{c}{38.01}  & \multicolumn{2}{c}{196.88}  & \multicolumn{2}{c}{959.75} \\   \\

 \multicolumn{1}{c}{BLTP-GMRES(5)}  &  \multicolumn{1}{c}{$\alpha_{opt}$} & \multicolumn{2}{c}{0.4}  & \multicolumn{2}{c}{0.4}   & \multicolumn{2}{c}{0.4} & \multicolumn{2}{c}{0.4}  & \multicolumn{2}{c}{0.4}  & \multicolumn{2}{c}{0.4} \\

 \multicolumn{1}{c}{}  &  \multicolumn{1}{c}{IT} & \multicolumn{2}{c}{8}  & \multicolumn{2}{c}{8}   & \multicolumn{2}{c}{8} & \multicolumn{2}{c}{8}  & \multicolumn{2}{c}{8}  & \multicolumn{2}{c}{8} \\

 \multicolumn{1}{c}{}  &  \multicolumn{1}{c}{CPU} & \multicolumn{2}{c}{0.03}  & \multicolumn{2}{c}{0.11}   & \multicolumn{2}{c}{0.7} & \multicolumn{2}{c}{3.96}  & \multicolumn{2}{c}{21.99}  & \multicolumn{2}{c}{112.15} \\   \hline
\end{tabular}
\label{Tbl2}
\end{table}

\begin{table}[!ht]
\caption{Numerical results for  Example \ref{Ex3}.}
\begin{tabular}{ccc|c|cc|cc|cc|cc|cc|cc|cc|} \hline

\multicolumn{1}{c}{Method}   &  \multicolumn{1}{c}{}   &  \multicolumn{2}{c}{$m=32$}  &  \multicolumn{2}{c}{$m=64$}  & \multicolumn{2}{c}{$m=128$}  & \multicolumn{2}{c}{$m=256$}  & \multicolumn{2}{c}{$m=512$}   & \multicolumn{2}{c}{$m=1024$}\\ [2mm] \hline

\multicolumn{1}{c}{GMRES(5)}  &  \multicolumn{1}{c}{IT} & \multicolumn{2}{c}{235}  & \multicolumn{2}{c}{\dag}   & \multicolumn{2}{c}{\dag} & \multicolumn{2}{c}{\dag}  & \multicolumn{2}{c}{\dag}  & \multicolumn{2}{c}{\dag} \\

 \multicolumn{1}{c}{}  &  \multicolumn{1}{c}{CPU} & \multicolumn{2}{c}{0.41}  & \multicolumn{2}{c}{-}   & \multicolumn{2}{c}{-} & \multicolumn{2}{c}{-}  & \multicolumn{2}{c}{-}  & \multicolumn{2}{c}{-} \\ \\

 \multicolumn{1}{c}{MHSS-GMRES(5)}  &  \multicolumn{1}{c}{$\alpha_{opt}$} & \multicolumn{2}{c}{52}  & \multicolumn{2}{c}{18}   & \multicolumn{2}{c}{-} & \multicolumn{2}{c}{-}  & \multicolumn{2}{c}{-}  & \multicolumn{2}{c}{-} \\

 \multicolumn{1}{c}{}  &  \multicolumn{1}{c}{IT} & \multicolumn{2}{c}{120}  & \multicolumn{2}{c}{272}   & \multicolumn{2}{c}{\dag} & \multicolumn{2}{c}{\dag}  & \multicolumn{2}{c}{\dag}  & \multicolumn{2}{c}{\dag} \\

 \multicolumn{1}{c}{}  &  \multicolumn{1}{c}{CPU} & \multicolumn{2}{c}{0.6}  & \multicolumn{2}{c}{6.95}   & \multicolumn{2}{c}{-} & \multicolumn{2}{c}{-}  & \multicolumn{2}{c}{-}  & \multicolumn{2}{c}{-} \\   \\

 \multicolumn{1}{c}{GSOR-GMRES(5)}  &  \multicolumn{1}{c}{$\alpha_{opt}$} & \multicolumn{2}{c}{0.776}  & \multicolumn{2}{c}{0.566}   & \multicolumn{2}{c}{0.354} & \multicolumn{2}{c}{0.199}  & \multicolumn{2}{c}{0.106}  & \multicolumn{2}{c}{0.055} \\

 \multicolumn{1}{c}{}  &  \multicolumn{1}{c}{IT} & \multicolumn{2}{c}{7}  & \multicolumn{2}{c}{8}   & \multicolumn{2}{c}{11} & \multicolumn{2}{c}{22}  & \multicolumn{2}{c}{52}  & \multicolumn{2}{c}{117} \\

 \multicolumn{1}{c}{}  &  \multicolumn{1}{c}{CPU} & \multicolumn{2}{c}{0.07}  & \multicolumn{2}{c}{0.21}   & \multicolumn{2}{c}{1.76} & \multicolumn{2}{c}{20.39}  & \multicolumn{2}{c}{255.93}  & \multicolumn{2}{c}{3926.97} \\   \\

 \multicolumn{1}{c}{BLTP-GMRES(5)}  &  \multicolumn{1}{c}{$\alpha_{opt}$} & \multicolumn{2}{c}{0.4}  & \multicolumn{2}{c}{0.7}   & \multicolumn{2}{c}{1.0} & \multicolumn{2}{c}{1.4}  & \multicolumn{2}{c}{1.7}  & \multicolumn{2}{c}{2.0} \\

 \multicolumn{1}{c}{}  &  \multicolumn{1}{c}{IT} & \multicolumn{2}{c}{4}  & \multicolumn{2}{c}{5}   & \multicolumn{2}{c}{7} & \multicolumn{2}{c}{9}  & \multicolumn{2}{c}{12}  & \multicolumn{2}{c}{18} \\

 \multicolumn{1}{c}{}  &  \multicolumn{1}{c}{CPU} & \multicolumn{2}{c}{0.02}  & \multicolumn{2}{c}{0.11}   & \multicolumn{2}{c}{0.94} & \multicolumn{2}{c}{7.33}  & \multicolumn{2}{c}{55.15}  & \multicolumn{2}{c}{556.44} \\   \hline
\end{tabular}
\label{Tbl3}
\caption{Numerical results for  Example \ref{Ex4}.}
\begin{tabular}{ccc|c|cc|cc|cc|cc|cc|cc|cc|} \hline
\multicolumn{1}{c}{Method}   &  \multicolumn{1}{c}{}   &  \multicolumn{2}{c}{$m=32$}  &  \multicolumn{2}{c}{$m=64$}  & \multicolumn{2}{c}{$m=128$}  & \multicolumn{2}{c}{$m=256$}  & \multicolumn{2}{c}{$m=512$}   & \multicolumn{2}{c}{$m=1024$}\\ [2mm] \hline

\multicolumn{1}{c}{GMRES(5)}  &  \multicolumn{1}{c}{IT} & \multicolumn{2}{c}{138}  & \multicolumn{2}{c}{\dag}   & \multicolumn{2}{c}{\dag} & \multicolumn{2}{c}{\dag}  & \multicolumn{2}{c}{\dag}  & \multicolumn{2}{c}{\dag} \\

 \multicolumn{1}{c}{}  &  \multicolumn{1}{c}{CPU} & \multicolumn{2}{c}{0.15}  & \multicolumn{2}{c}{-}   & \multicolumn{2}{c}{-} & \multicolumn{2}{c}{-}  & \multicolumn{2}{c}{-}  & \multicolumn{2}{c}{-} \\ \\

 \multicolumn{1}{c}{MHSS-GMRES(5)}  &  \multicolumn{1}{c}{$\alpha_{opt}$} & \multicolumn{2}{c}{130}  & \multicolumn{2}{c}{10}   & \multicolumn{2}{c}{13} & \multicolumn{2}{c}{8}  & \multicolumn{2}{c}{-}  & \multicolumn{2}{c}{-} \\

 \multicolumn{1}{c}{}  &  \multicolumn{1}{c}{IT} & \multicolumn{2}{c}{12}  & \multicolumn{2}{c}{28}   & \multicolumn{2}{c}{84} & \multicolumn{2}{c}{283}  & \multicolumn{2}{c}{\dag}  & \multicolumn{2}{c}{\dag} \\

 \multicolumn{1}{c}{}  &  \multicolumn{1}{c}{CPU} & \multicolumn{2}{c}{0.08}  & \multicolumn{2}{c}{0.41}   & \multicolumn{2}{c}{5.75} & \multicolumn{2}{c}{88.92}  & \multicolumn{2}{c}{-}  & \multicolumn{2}{c}{-} \\   \\

 \multicolumn{1}{c}{GSOR-GMRES(5)}  &  \multicolumn{1}{c}{$\alpha_{opt}$} & \multicolumn{2}{c}{0.038}  & \multicolumn{2}{c}{0.038}   & \multicolumn{2}{c}{0.038} & \multicolumn{2}{c}{0.038}  & \multicolumn{2}{c}{0.038}  & \multicolumn{2}{c}{0.037} \\

 \multicolumn{1}{c}{}  &  \multicolumn{1}{c}{IT} & \multicolumn{2}{c}{69}  & \multicolumn{2}{c}{92}   & \multicolumn{2}{c}{75} & \multicolumn{2}{c}{66}  & \multicolumn{2}{c}{67}  & \multicolumn{2}{c}{152} \\

 \multicolumn{1}{c}{}  &  \multicolumn{1}{c}{CPU} & \multicolumn{2}{c}{0.22}  & \multicolumn{2}{c}{1.3}   & \multicolumn{2}{c}{19.08} & \multicolumn{2}{c}{194.10}  & \multicolumn{2}{c}{231.36}  & \multicolumn{2}{c}{2422.25} \\   \\

 \multicolumn{1}{c}{BLTP-GMRES(5)}  &  \multicolumn{1}{c}{$\alpha_{ opt}$} & \multicolumn{2}{c}{2.1}  & \multicolumn{2}{c}{2.2}   & \multicolumn{2}{c}{2.3} & \multicolumn{2}{c}{2.4}  & \multicolumn{2}{c}{2.5}  & \multicolumn{2}{c}{2.3} \\

 \multicolumn{1}{c}{}  &  \multicolumn{1}{c}{IT} & \multicolumn{2}{c}{21}  & \multicolumn{2}{c}{21}   & \multicolumn{2}{c}{19} & \multicolumn{2}{c}{21}  & \multicolumn{2}{c}{20}  & \multicolumn{2}{c}{20} \\

 \multicolumn{1}{c}{}  &  \multicolumn{1}{c}{CPU} & \multicolumn{2}{c}{0.06}  & \multicolumn{2}{c}{0.3}   & \multicolumn{2}{c}{9.30} & \multicolumn{2}{c}{13.20}  & \multicolumn{2}{c}{63.25}  & \multicolumn{2}{c}{369.66} \\   \hline
\end{tabular}
\label{Tbl4}
\end{table}

\begin{example}\label {Ex3}\rm
(See \cite{Bai1}) Consider the system of linear equations $\eqref{Eq1}$ as following
$$
T=I\otimes V + V \otimes I  \quad   \textrm{and}  \quad  W=10(I \otimes V_c + V_c \otimes I) +9(e_1 e^T_m + e_m (e^T_m) \otimes I,
$$
where $V = tridiag(-1, 2,-1) \in \mathbb{R}^{m \times m}$, $V_c = V - e_1 e^T_m - e_m e^T_1 \in \mathbb{R}^{m \times m}$ and $e_1$ and $e_m$ are the first and last unit vectors in $R^m$, respectively. We take the right-hand side vector $b$ to be $b = (1 + i)A\textbf{1}$, with \textbf{1} being the vector of all entries equal to 1.  Here $T$ and $W$ correspond to the five-point centered difference matrices approximating the negative Laplacian operator with homogeneous Dirichlet boundary conditions and periodic boundary conditions, respectively, on a uniform mesh in the unit square $[0, 1]\in [0, 1]$ with the mesh-size
$h = 1/(m+ 1)$.
\end{example}

\begin{example}\label {Ex4}\rm
(See \cite{Bai1,Yang}) We consider the complex Helmholtz equation
$$
-\Delta u + \sigma_1 u +i \sigma_2 u = f,
$$
where $\sigma_1$ and $\sigma_2$ are real coefficient functions, $u$ satisfies Dirichlet boundary conditions in $D = [0, 1] \times [0, 1]$ and $i = \sqrt{-1}$. We discretize the problem with finite differences on a $m \times m$ grid with mesh size $h = {1}/{(m+ 1)}$. This leads to a system of linear equations
$$
((K+ \sigma_1 I) + i\sigma_2 I) x = b,
$$
where $K = I \otimes V_m + V_m \otimes I$ is the discretization of $-\Delta$ by means of centered differences, wherein $V_m = h^{-2}tridiag(-1, 2,-1) \in \mathbb{R}^{m\in m}$. The right-hand side vector $b$ is taken to be $b = (1+i)A\textbf{1}$, with \textbf{1} being the vector of all entries equal to 1. Furthermore, before solving
the system we normalize the coefficient matrix and the right-hand side vector by multiplying both by $h^2$. For the numerical tests we set $\sigma_1 =-10$ and $\sigma_2= 500$.
\end{example}

Numerical results for Examples  \ref{Ex1}-\ref{Ex4} are listed in Tables \ref{Tbl1}-\ref{Tbl4}, respectively.
We use GMRES(5) as the iterative method. In the tables, the numerical results of the GMRES(5) method in conjunction with the MHSS, the GSOR and the BLT preconditioners are denoted by  MHSS-GMRES(5), GSOR-GMRES(5) and BLTP-GMRES(5). For the BLT and MHSS preconditioners the optimal value of $\alpha$ ($\alpha_{opt}$) were found experimentally and are the ones resulting in the least numbers of iterations. For the GSOR preconditioner the values was computed by using Theorem 2 in \cite{Salkuyeh1}. As the numerical results show for all the four examples the preconditioned GMRES(5) with the BLT preconditioner outperforms the other methods in terms of the number of the iterations and the CPU time.

Surprisingly, we see from Tables \ref{Tbl1}, \ref{Tbl2} and \ref{Tbl3} that the number of iterations of the BLT preconditioner, in contrast to the other methods,  does not grow with the problem size. However, this growth for Example \ref{Ex3} is slow for the BLT preconditioner. Another observation which can be posed here is that, for the Examples \ref{Ex1}, \ref{Ex2} and \ref{Ex3}, the optimal value of the parameter $\alpha$ is not sensitive to the problem size.

\section{Conclusion}\label{Sec4}

We have established and analyzed a block lower triangular (BLT) preconditioner to expedite the convergence speed of  the Krylov subspace methods such as GMRES, for solving an important class of complex symmetric system of  linear equations. Eigenvalues distribution of the preconditioned matrix has been intestigated.
Since the proposed preconditioner involves a parameter, we have defined an interval for the choosing a suitable parameter. We have compared the numerical results of the GMRES(5) in conjunction with BLT preconditioner with those of the GSOR and MHSS preconditioners. Numerical results show that the BLT preconditioner is superior to the other methods in terms of both iteration number and CPU time.

\section*{Acknowledgements}

The work of Davod Khojasteh Salkuyeh is partially supported by University of Guilan.

\end{document}